\theoremstyle{plain}
\newtheorem{theorem}{Theorem}[section]
\newtheorem{corollary}[theorem]{Corollary}
\theoremstyle{definition}
\theoremstyle{remark}
 \DeclareMathOperator{\Hom}{Hom}
\DeclareMathOperator{\invlim}{\varprojlim}
\DeclareMathOperator{\dirlim}{\varinjlim}
\DeclareMathOperator{\ide}{id}
\begin{document}

\title{Cohomology of Profinite Groups of Bounded Rank}

\author{Peter Symonds}
%\thanks{Partially supported by an International Academic Fellowship from the Leverhulme Trust.}
\address{Department of Mathematics\\
         University of Manchester\\
     Manchester M13 9PL\\
     United Kingdom}
\email{Peter.Symonds@manchester.ac.uk}

\subjclass[2010]{Primary: 20J06; Secondary: 20E18}
%\keywords{keywords}
%\subjclass{Primary: subject; Secondary: subject}
%\date{}

\begin{abstract}
We generalise to profinite groups some of our previous results on the cohomology of pro-$p$ groups of bounded sectional $p$-rank.
\end{abstract}

\maketitle

\section{Introduction}
\label{sec:intro}

The purpose of this note is to generalise to profinite groups the results of \cite{Sy} for pro-$p$ groups of bounded rank. The principal one of these states that amongst the pro-$p$ groups of rank bounded by a number $r$ there are only finitely many mod-$p$ cohomology rings up to isomorphism. Recall that rank here means the $p$-sectional rank, which is the maximum of the ranks of $H/K$ where $H \leq G$ and $K \trianglelefteq H$ and $H/K$ is an elementary abelian $p$-group. The generalisation is as follows.

\begin{theorem}
	\label{th:main}
	For given $p$ and $r$, the profinite groups of $p$-sectional rank at most $r$ have only finitely many graded isomorphism classes of mod-$p$ cohomology rings between them and these are all noetherian.
\end{theorem}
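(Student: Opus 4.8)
The plan is to reduce the problem to the pro-$p$ case already handled in \cite{Sy}, by combining Sylow theory for profinite groups with the Cartan--Eilenberg description of cohomology via stable elements, and then to argue that only a bounded amount of ``fusion'' data can intervene. Let $G$ be a profinite group of $p$-sectional rank at most $r$ and let $P$ be a Sylow pro-$p$-subgroup. Since every section of $P$ is a section of $G$, the group $P$ is a pro-$p$ group of rank at most $r$, so \cite{Sy} applies to it directly. I would first work through the finite quotients $G/U$, with $U \trianglelefteq G$ open: there $PU/U$ is a Sylow $p$-subgroup of index prime to $p$, so $\res\colon H^*(G/U;\F_p)\to H^*(PU/U;\F_p)$ is injective with image the ring of stable elements. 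Passing to the colimit and using $H^*(G;\F_p)=\varinjlim_U H^*(G/U;\F_p)$ (and likewise for $P$), one obtains that $\res\colon H^*(G;\F_p)\to H^*(P;\F_p)$ is injective with image the subring $S$ of elements stable under the fusion category $\mathcal{F}=\mathcal{F}_P(G)$ of $P$ in $G$.

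By \cite{Sy}, $H^*(P;\F_p)$ is noetherian and, as $P$ runs over pro-$p$ groups of rank at most $r$, falls into only finitely many graded isomorphism classes. The remaining task is to bound the fusion. The essential observation is that a pro-$p$ group $Q$ of rank at most $r$ has an automorphism group whose prime-to-$p$ part is bounded in terms of $r$ and $p$: any such automorphism acts faithfully on a bounded characteristic quotient of $Q$, where its image lies in a copy of $\mathrm{GL}$ of bounded size (finite $p'$-subgroups of $\mathrm{GL}_d(\Z_p)$ have bounded order). Since all fusion in $G$ is generated by $p'$-automorphisms of $p$-subgroups $Q\le P$ --- an Alperin-type fusion theorem, which I would prove for profinite $G$ by taking a limit over the finite quotients --- and each such $Q$ contributes only boundedly much, for a fixed $P$ only finitely many fusion systems $\mathcal{F}$ can arise. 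I expect this control of the fusion in the profinite setting, in particular the reduction to finitely many relevant subgroups $Q$ up to conjugacy together with their finite $p'$-automorphism data, to be the main obstacle.

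Granting this, finitely many pairs $(P,\mathcal{F})$ yield finitely many stable subrings $S\cong H^*(G;\F_p)$ up to graded isomorphism, which gives the finiteness statement. For the noetherian assertion it suffices to show that each such $S$ is a finitely generated $\F_p$-algebra, since a finitely generated graded-commutative algebra over a field is automatically noetherian. This I would obtain by realising the stable elements as the invariants of a finite group $W$ (assembled from the bounded $p'$-automorphism data above) acting on a finitely generated algebra, and invoking Noether's finiteness theorem for invariants; alternatively, one shows via a transfer/norm map that $H^*(P;\F_p)$ is a finite module over $S$ and applies the Eakin--Nagata theorem (a subring $R$ of a noetherian ring that is module-finite over $R$ is itself noetherian) to deduce that $H^*(G;\F_p)=S$ is noetherian. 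Should the fusion-counting prove awkward, finiteness can instead be approached through uniform bounds on the degrees of generators and relations (Castelnuovo--Mumford regularity), which for bounded rank limit the possible presentations to a finite list.
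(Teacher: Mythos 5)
There is a genuine gap, in fact two, both at the point you yourself flag as the main obstacle. Your strategy requires (a) that restriction identifies $H^*(G;\mathbb{F}_p)$ with the $\mathcal{F}_P(G)$-stable elements of $H^*(P;\mathbb{F}_p)$, and (b) that a fixed pro-$p$ group $P$ of rank at most $r$ carries only finitely many fusion systems arising from ambient profinite groups. Neither follows by the routes you sketch. For (a), injectivity of restriction does pass to the colimit, but the image is a priori only $\varinjlim H^*(S_i)^{\mathcal{F}_i}$ over the finite quotients, and identifying this colimit of stable subrings with the stable subring $H^*(P)^{\mathcal{F}_P(G)}$ of the colimit is a delicate lifting argument requiring pro-saturation: it is exactly Theorem~\ref{th:stable} and Corollary~\ref{co:stab} of the paper, whose proof the author got wrong on a first attempt (as acknowledged, cf.\ \cite{DGM}) --- so it cannot be waved through with ``passing to the colimit''. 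For (b), a pro-$p$ group of finite rank typically has uncountably many closed subgroups (already $\mathbb{Z}_p \times \mathbb{Z}_p$ does), so your reduction to ``finitely many relevant subgroups $Q$ up to conjugacy'' presupposes an Alperin-type essential-subgroup theory for pro-fusion systems that you do not supply; bounding the $p'$-part of $\mathrm{Aut}(Q)$ for each individual $Q$ does not bound the number of fusion systems without it. Your Eakin--Nagata route to noetherianity has the same defect: module-finiteness of $H^*(P)$ over the image of restriction comes from transfer in the finite case, but a Sylow pro-$p$ subgroup of a profinite group can have infinite index, so no transfer (and no invariant-theoretic description of stable elements as $W$-invariants) is available.

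The paper circumvents all of this and never counts fusion systems. In the finite-group case it counts graded ring homomorphisms $H^*(S) \rightarrow H^*(P)$: there are finitely many, because $H^*(S)$ is finitely generated and $H^*(P)$ is finite in each degree, so there are finitely many stable-element conditions $(i_P^* - \varphi^*)(x)=0$ and hence finitely many possible stable subrings. Finiteness of the resulting list of rings then yields a uniform bound $N$, depending only on $p$ and $r$, on the degrees of generators. For profinite $G$ the argument uses only two soft facts: $H^*(G) = \varinjlim H^*(G/N_i)$ is generated in degrees at most $N$, and restriction embeds $H^*(G)$ in $H^*(S)$ because it does so on each finite quotient --- no identification of the image as stable elements is needed. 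Since there are finitely many possible rings $H^*(S)$ (the pro-$p$ case of \cite{Sy}) and each is finite in degrees at most $N$, there are only finitely many subrings generated in degrees at most $N$, and each is finitely generated, hence noetherian. Ironically, the ``fallback'' you mention at the end --- uniform bounds on generator degrees --- is essentially the correct main argument, while the fusion-counting you propose as the primary route is both unproven and unnecessary.
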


Another result bounds the dimensions of the cohomology groups.

\begin{theorem}
	\label{th:bounds}
	If $G$ is a profinite group of $p$-sectional rank at most $r$ then:
	\begin{enumerate}
		\item
		 $\dim_{\mathbb F_p} H^i(G; \mathbb F_p) \leq \binom{r(\lceil \log_2r \rceil  +3 +e)+i-1}{i}$, where $e=0$ for $p$ odd and $e=1$ for $p=2$; 
	\item
	there is a function $X(p,r)$ such that  $\dim_{\mathbb F_p} H^i(G; \mathbb F_p) \leq X(p,r) \cdot i^{a-1}$, where $a$ is the maximum rank of an elementary abelian $p$-subgroup.
	\end{enumerate}
\end{theorem}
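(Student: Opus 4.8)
The plan is to reduce both parts to the pro-$p$ groups treated in \cite{Sy}. Let $P$ be a Sylow pro-$p$ subgroup of $G$. The index $[G:P]$ is a supernatural number prime to $p$, so the usual transfer argument (the composite $\mathrm{cor}^G_P\circ\mathrm{res}^G_P$ is multiplication by $[G:P]$) shows that $\mathrm{res}^G_P\colon H^*(G;\mathbb{F}_p)\to H^*(P;\mathbb{F}_p)$ is injective on mod-$p$ cohomology, whence $\dim_{\mathbb{F}_p}H^i(G;\mathbb{F}_p)\le\dim_{\mathbb{F}_p}H^i(P;\mathbb{F}_p)$ for all $i$. Every section of $P$ is a section of $G$, so $P$ has $p$-sectional rank at most $r$; and every elementary abelian $p$-subgroup of $G$ is conjugate into $P$, so the invariant $a$ agrees for $G$ and $P$. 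It therefore suffices to bound $\dim_{\mathbb{F}_p}H^i(P;\mathbb{F}_p)$ for a pro-$p$ group $P$ of rank at most $r$.

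For part (1) I would first record an elementary bound: if a graded-commutative $\mathbb{F}_p$-algebra $A$ with $A^0=\mathbb{F}_p$ is generated by $N$ homogeneous elements of positive degrees $d_1,\dots,d_N$, then $\dim_{\mathbb{F}_p}A^i\le\binom{N+i-1}{i}$. Indeed $A^i$ is spanned by the monomials in the generators of total degree $i$, so the Hilbert series of $A$ is dominated coefficientwise by $\prod_{j=1}^N(1-t^{d_j})^{-1}$, and since the coefficients of $(1-t^{d})^{-1}$ are bounded by those of $(1-t)^{-1}$, this is in turn dominated by $(1-t)^{-N}=\sum_i\binom{N+i-1}{i}t^i$; for odd $p$ graded-commutativity only deletes monomials and so preserves the estimate. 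It thus remains to show that $H^*(P;\mathbb{F}_p)$ is generated by at most $N=r(\lceil\log_2 r\rceil+3+e)$ homogeneous elements of positive degree. Here I would invoke the structure theory of pro-$p$ groups of finite rank: $P$ has a uniformly powerful open normal subgroup $U$ of dimension $d\le r$, whose continuous mod-$p$ cohomology is exterior on $d$ degree-one classes, with finite quotient $Q=P/U$ of rank at most $r$; running the Lyndon--Hochschild--Serre spectral sequence, each of boundedly many layers in a suitable refinement of this extension contributes at most $r$ algebra generators.

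I expect the main obstacle to be exactly this generation count with the stated constant. One must control, through the powerful-$p$-group machinery of Lubotzky--Mann, both the number of generators contributed per layer (at most $r$) and the number of layers (at most $\lceil\log_2 r\rceil+3+e$, where the $\log_2 r$ reflects the index of a powerful subgroup and the extra $e$ the behaviour of the Bockstein at $p=2$). This is the substance of the pro-$p$ case in \cite{Sy}; granted it, the combinatorial lemma gives (1) for $P$ and hence for $G$.

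Part (2) I would deduce more softly from Theorem \ref{th:main}. That theorem makes $H^*(G;\mathbb{F}_p)$ noetherian, and Quillen's theorem --- in its profinite form, obtained by passing to the colimit $H^*(G;\mathbb{F}_p)=\varinjlim_U H^*(G/U;\mathbb{F}_p)$ over open normal $U$ --- identifies its Krull dimension with $a$. A finitely generated graded $\mathbb{F}_p$-algebra of Krull dimension $a$ has rational Hilbert series with a pole of order $a$ at $t=1$, so its degree-$i$ dimension is eventually a quasi-polynomial of degree $a-1$ and in particular is at most $c\cdot i^{a-1}$ for a constant $c$ depending on the ring. Uniformity of $X(p,r)$ is then immediate from the finiteness in Theorem \ref{th:main}: only finitely many rings occur among groups of $p$-sectional rank at most $r$, so one takes $X(p,r)$ to be the maximum of the corresponding constants $c$, the exponent $a-1$ matching each group automatically since $a$ is the Krull dimension of its own cohomology ring.
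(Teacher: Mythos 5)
Your reduction to a Sylow pro-$p$ subgroup stumbles on exactly the difficulty the paper flags in its introduction: a Sylow pro-$p$ subgroup $P$ of a profinite group can have \emph{infinite} index, and then the corestriction $\mathrm{cor}^G_P$ does not exist, so the assertion that ``$\mathrm{cor}^G_P\circ\mathrm{res}^G_P$ is multiplication by $[G:P]$'' is meaningless for a supernatural index. The injectivity of $\mathrm{res}^G_P$ on mod-$p$ cohomology is in fact true (it is a theorem of Serre), but its proof is itself a limit argument, and the paper sidesteps Sylow subgroups entirely in part (1): it writes $H^*(G)=\dirlim H^*(G/N_i)$ over the finite quotients, notes that each $G/N_i$ again has $p$-sectional rank at most $r$ (a section of a quotient is a section of $G$), applies the finite case (finite Sylow subgroup, genuine transfer, and \cite[1.2]{Sy}), and observes that the binomial bound passes to the direct limit. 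So your part (1) is repairable by substituting this colimit argument for the transfer step; your powerful-subgroup sketch is unnecessary, since the generator count with the constant $r(\lceil\log_2 r\rceil+3+e)$ is precisely what is quoted from \cite{Sy}, not reproved.

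The serious gap is in part (2). You invoke ``Quillen's theorem in its profinite form, obtained by passing to the colimit,'' but the colimit gives no such thing: inflation maps $H^*(G/U)\rightarrow H^*(G/V)$ need not be injective, Krull dimension does not commute with directed colimits, and the invariant $a$ of a finite quotient can strictly exceed that of $G$, because elementary abelian subgroups of quotients need not lift. A concrete witness is the pro-$2$ infinite dihedral group $G=\mathbb Z_2\rtimes C_2$: every involution generates a maximal elementary abelian subgroup, so $a(G)=1$ and $\dim H^i(G;\mathbb F_2)$ is bounded, yet every finite dihedral quotient $D_{2^n}$ contains a Klein four-group, so $a(G/N_i)=2$ and the quotientwise estimate only yields growth of order $i^{1}$ --- strictly weaker than the claimed $X(p,2)\cdot i^{0}$. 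Thus identifying the Krull dimension of $H^*(G)$ with $a(G)$ for profinite $G$ is the whole content of the step, and your one-line justification would fail. The paper takes a different route: it uses the finite case of Theorem~\ref{th:main} to get a uniform generation degree $N$, bounds the number of generators by Theorem~\ref{th:bounds}(1) in degrees up to $N$, and then invokes the proof of \cite[1.3]{Sy} verbatim, which is where the growth estimate of order $i^{a-1}$ is actually carried out. Your Hilbert-series step (pole order equals Krull dimension, hence $O(i^{a-1})$ growth) and your uniformity-by-finiteness step are fine, and your appeal to Theorem~\ref{th:main} is not circular, but without a proof or citable reference for Quillen stratification for profinite groups of finite rank your part (2) is incomplete.
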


In Section~\ref{se:fus} we will generalise these results to pro-fusion systems. 

The generalisation from pro-$p$ to profinite is harder than in the finite case, because a Sylow pro-$p$ subgroup might have infinite index.

\section{Proofs}
\label{sec:proofs}

We fix a prime $p$; all cohomology groups will have coefficients in $\mathbb F_p$ and rank will mean $p$-sectional rank. A homomorphism of cohomology rings means an homomorphism of graded rings. A subgroup will always mean a subgroup in the category of profinite groups.

First we prove part (1) of Theorem~\ref{th:bounds}. This is proved for finite $p$-groups in \cite[1.2]{Sy}. If $G$ is a finite group with Sylow $p$-subgroup $S$ then a standard transfer argument shows that the restriction map embeds $H^*(G)$ in $H^*(S)$, so the bound hold for $G$.

If $G$ is profinite, say $G=\invlim G/N_i$ with the $G/N_i$ finite, then $H^*(G)=\dirlim H^*(G/N_i)$. The ranks of the $G/N_i$ are also bounded by $r$, so the bound applies to each $H^*(G/N_i)$ and hence to their direct limit.

Now we prove Theorem~\ref{th:main}. This is proved for finite $p$-groups in \cite[1.1]{Sy} and for pro-$p$ groups in \cite[1.4]{Sy}. Let $G$ be a finite group with Sylow $p$-subgroup $S$. Restriction embeds $H^*(G)$ in $H^*(S)$ and the image can be characterised as the subring of stable elements, see e.g.\ \cite[XII 10.1]{CE}.

The stable elements of $H^*(S)$ are the $x \in H^*(S)$ such that, for any inclusion of a subgroup $i_P \! : P \rightarrow S$ and any homomorphism $\varphi \! : P \rightarrow S$ induced by inclusion and then conjugation by an element of $G$, we have $(i_P^*-\varphi^*)(x)=0$. The usual formulation only considers certain subgroups $P$, but all the conditions are certainly necessary on the image of $H^*(G)$, so this formulation is also valid.

By the validity of the result for finite $p$-groups, there are only finitely many possible isomorphism classes of rings $H^*(S)$ and $H^*(P)$. Since $H^*(S)$ is noetherian, there are only finitely many graded ring homomorphisms $H^*(S) \rightarrow H^*(P)$. Thus there are only finitely many different conditions of the form $(i_P^*-\varphi^*)(x)=0$ and hence only a finite number of possible subrings of stable elements. The cohomology of any finite group is noetherian. This completes the case of a finite group.

Since there are only finitely many possible cohomology rings for a finite group of rank at most $r$ and they are all finitely generated, there is a number $N$, depending only on $p$ and $r$, such that they are all generated in degrees at most $N$. The dimension of the sum of the cohomology groups in degrees 0 through $N$ is bounded in terms of $r$ and $N$, by Theorem~\ref{th:bounds}(1), and this bounds the number of generators needed. The proof of \cite[1.3]{Sy} now applies verbatim to prove part (2) of \ref{th:bounds}.

Let $G$ be a profinite group of rank at most $r$, say $G=\invlim G/N_i$, so $H^*(G)=\dirlim H^*(G/N_i)$. Thus $H^*(G)$ will also be generated in degrees at most $N$. Let $S$ be the Sylow pro-$p$ subgroup of $G$; then the restriction map identifies $H^*(G)$ with its image under restriction to $H^*(S)$ (because it does so on each finite quotient). By the pro-$p$ case of the theorem, there are only finitely many possible rings $H^*(S)$ and the part of $H^*(S)$ in degrees at most $N$ is finite. It follows that there are only finitely many subrings generated in degrees at most $N$ and these subrings are all finitely generated and so noetherian. This completes the proofs of Theorems \ref{th:main} and \ref{th:bounds}.

\section{Inflation Functors and Pro-Fusion Systems}
\label{se:fus}

An inflation functor is defined in \cite{Webb} (it is called a functor with Mackey structure in \cite{Sy2}). An inflation functor $M$ is a contravariant functor $M^*$ from the category of finite groups to the category of $R$-modules for some ring $R$ that is also a covariant functor $M_*$ on the subcategory of finite groups and injective group homomorphisms. The two structures are related, in particular the Mackey double coset formula holds. Examples are cohomology $H^*(G;R)$, where the covariant part is given by transfer and the representation or Green rings, where it is given by induction.

Such a functor $M$ can be extended to profinite groups by setting $M(G)=\dirlim M^*(G/N_i)$, when $G = \invlim G/N_i$. This is well defined as a contravariant functor on the category of profinite groups (and continuous homomorphisms) and the covariant structure is defined on injective homomorphisms with open image. 
The Mackey formula still holds. All of this is familiar in the case of cohomology, where the covariant structure is given by the transfer. We can also use the same construction on just a contravariant functor on finite groups or finite $p$-groups to obtain a functor on profinite groups or pro-$p$ groups respectively.

An inflation functor is said to be cohomological if $M_*(i_H) \circ M^*(i_H) = |G:H| \ide_{M(G)}$ for $i_H$ the inclusion of a subgroup $H$ in $G$. If this holds for all finite groups then it holds whenever $G$ is profinite and $H$ is an open subgroup. Cohomology $H^*(-; \mathbb F_p)$  is a cohomological inflation functor; the usual definition on a profinite group also satisfies the colimit property above. 

Pro-fusion systems were introduced in \cite{SS} as a generalisation of fusion systems to pro-$p$ groups. We will freely refer to that paper and use its notation without comment.  Roughly speaking, a pro-fusion system $\mathcal F$ on a pro-$p$ group $S$ is an inverse limit of fusion systems $\mathcal F_i$ on certain finite quotients $S_i$ of $S$.

For any pro-fusion system $\mathcal F$ on a pro-$p$ group $S$ and any inflation functor $M$ we define the stable elements $M(S)^{\mathcal F}$ to be the submodule of elements $x \in M(S)$ such that, for any inclusion of a subgroup $i_P \! : P \rightarrow S$ and any homomorphism $\varphi \in \Hom_{\mathcal F}( P, S)$, we have $(i_P^*-\varphi^*)(x)=0$ (where we write $f^*$ for $M^*(f)$). The method of stable elements, as mentioned above, is usually stated for cohomology, but the proof applies whenever $G$ is a finite group with Sylow $p$-subgroup $S$, $M$ is a cohomological inflation functor (or just a cohomological Mackey functor on $G$) and every rational prime except $p$ is invertible in $R$ to show that restriction $M(G) \rightarrow M(S)$ is injective with image $M(S)^{\mathcal F_S(G)}$, where $\mathcal F_S(G)$ is the fusion system on $S$ induced by $G$. Our aim is to extend this to profinite groups.

If $G$ is a profinite group with Sylow $p$-subgroup $S$, say $G = \invlim G/N_i$, then $M(G)=\dirlim M^*(G/N_i) \cong \dirlim M(S_i)^{\mathcal F_{S_i}(G/N_i)}$, where $S_i=SN_i/N_i$. There is also a pro-fusion system $\mathcal F_S(G)=\invlim \mathcal F_{S_i}(G/N_i)$ on $S$, which is pro-saturated. Thus what we need is the next result, which has also been considered in \cite{DGM}. I am grateful to the authors of \cite{DGM} for pointing out an error in my original proof.

\begin{theorem}
	\label{th:stable}
	Let $M$ be a contravariant functor from $p$-groups to $R$-modules, extended to pro-$p$ groups as above. Let $S$ be a pro-$p$ group and let $\mathcal F = \invlim \mathcal F_i$ be a pro-saturated pro-fusion system on $S$. Then inflation induces an isomorphism
	\[ M(S)^{\mathcal F} \cong \dirlim M(S_i)^{\mathcal F_i}. \]
	\end{theorem}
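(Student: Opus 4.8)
The plan is to construct the inflation comparison map explicitly and then show it is a bijection, treating injectivity and surjectivity separately; the surjectivity is where the real work lies. Write $S = \invlim S_i$ with $S_i = S/U_i$ for a cofinal family of open normal subgroups $U_i$, and let $q_i \colon S \to S_i$ be the projection, so that $M(S) = \dirlim_i M(S_i)$ with $q_i^* \colon M(S_i) \to M(S)$ the canonical map to the colimit. The inverse system $\mathcal F = \invlim \mathcal F_i$ supplies projection functors $\mathcal F \to \mathcal F_i$; on an object $P$ these send $P$ to its image $P_i = PU_i/U_i$, and a morphism $\varphi \in \Hom_{\mathcal F}(P,S)$ to the induced $\varphi_i \in \Hom_{\mathcal F_i}(P_i, S_i)$, characterised by $q_i \varphi = \varphi_i \pi_i$ where $\pi_i \colon P \to P_i$ is the projection.

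First I would check that inflation carries $M(S_i)^{\mathcal F_i}$ into $M(S)^{\mathcal F}$, so these assemble into a map $\Phi \colon \dirlim M(S_i)^{\mathcal F_i} \to M(S)^{\mathcal F}$. Given $x \in M(S_i)^{\mathcal F_i}$ and a test pair $(P,\varphi)$ for $\mathcal F$-stability, functoriality of $M$ together with the identities $q_i i_P = i_{P_i}\pi_i$ and $q_i \varphi = \varphi_i \pi_i$ gives $(i_P^* - \varphi^*)q_i^* x = \pi_i^*(i_{P_i}^* - \varphi_i^*)x = 0$, since $x$ is $\mathcal F_i$-stable. The same computation at finite levels shows the transition maps of $\{M(S_i)\}$ restrict to maps $M(S_i)^{\mathcal F_i} \to M(S_{i'})^{\mathcal F_{i'}}$, so $\{M(S_i)^{\mathcal F_i}\}$ is a subsystem. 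Injectivity of $\Phi$ is then formal: the composite of $\Phi$ with the inclusion $M(S)^{\mathcal F} \hookrightarrow M(S) = \dirlim M(S_i)$ is the filtered colimit of the inclusions $M(S_i)^{\mathcal F_i} \hookrightarrow M(S_i)$, hence injective because filtered colimits are exact.

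The crux is surjectivity. Given $\tilde x \in M(S)^{\mathcal F}$, I write $\tilde x = q_j^* x$ for some $j$ and $x \in M(S_j)$, with inflations $x_k \in M(S_k)$; the goal is to show $x_k$ is $\mathcal F_k$-stable for some $k \ge j$. For each $k$ set $D_k = \{(Q,\psi) : Q \le S_k,\ \psi \in \Hom_{\mathcal F_k}(Q, S_k),\ (i_Q^* - \psi^*)x_k \ne 0\}$, a finite set since $S_k$ is finite. A direct computation with the transition map $\rho \colon S_{k'} \to S_k$ shows $(i_R^* - \chi^*)x_{k'} = \sigma^*(i_Q^* - \psi^*)x_k$ whenever $(R,\chi)$ projects to $(Q,\psi)$ under $\mathcal F_{k'} \to \mathcal F_k$ (with $\sigma = \rho|_R$); hence the projection functors restrict to maps $D_{k'} \to D_k$, making $\{D_k\}$ an inverse system of finite sets. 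If every $D_k$ were nonempty, compactness (an inverse limit of nonempty finite sets is nonempty) would yield a compatible family $(Q_k,\psi_k)$, which assembles, via $\mathcal F = \invlim \mathcal F_i$, into a genuine test pair $(P,\varphi)$ with $P = \invlim Q_k$ and $\varphi = \invlim \psi_k$. Since compatibility forces $PU_k/U_k = Q_k$, we have $M(P) = \dirlim_k M(Q_k)$, and under this identification $(i_P^* - \varphi^*)\tilde x$ is represented at each level $k$ by $(i_{Q_k}^* - \psi_k^*)x_k$, which is nonzero for every $k$; therefore $(i_P^* - \varphi^*)\tilde x \ne 0$, contradicting the $\mathcal F$-stability of $\tilde x$. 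Hence some $D_k$ is empty and the corresponding $x_k$ is stable, so $\tilde x$ lies in the image of $\Phi$.

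I expect the main obstacle to be exactly this passage to a uniform finite level. The naive approach—take one test pair $(Q,\psi)$ at level $k$, lift it to $\mathcal F$, and push the relation $(i_Q^* - \psi^*)x_k = 0$ down—fails because the inflation $M(Q) \to M(q_k^{-1}(Q))$ into the pro-$p$ preimage need not be injective, so a relation can vanish in $M(S)$ without vanishing at level $k$, and the level at which it does vanish depends on the pair. Organising all the instability into the inverse system $\{D_k\}$ and invoking compactness is what converts these pair-by-pair, level-by-level vanishings into a single working level, while simultaneously manufacturing the offending $\mathcal F$-morphism in the contrapositive. A secondary point to verify carefully is that the projection functors of the pro-fusion system genuinely restrict to the defect sets and that the assembled $(P,\varphi)$ is a legitimate morphism of $\mathcal F$; both rest on the standing pro-saturation hypothesis and the inverse-limit description of $\mathcal F$ from \cite{SS}.
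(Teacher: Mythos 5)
Your proof is correct, and it takes a genuinely different route from the paper's. The paper argues directly: using pro-saturation via \cite[4.5]{SS} it replaces the $\mathcal F_i$ by the quotients $\mathcal F/N$, which enjoy an upward lifting property (a morphism $\theta \in \Hom_{\mathcal F_j}(Q,S_j)$ lifts to the full preimage $\tilde Q_k$ at every level $k\geq j$, and to $\mathcal F$ itself). Lifting each of the finitely many level-$j$ pairs into $\mathcal F$ shows each defect $(i_Q^*-\theta^*)(x_j)$ dies after inflation to some finite level $\ell(Q,\theta)$; taking $k$ above all of these, the paper then proves $x_k$ stable by projecting an arbitrary level-$k$ pair $(P,\varphi)$ down to level $j$ and re-lifting, so that its defect factors through an already-killed one. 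You instead run the contrapositive through compactness: your defect sets $D_k$ form an inverse system of finite sets under the downward transition functors (your identity $(i_R^*-\chi^*)x_{k'}=\sigma^*(i_Q^*-\psi^*)x_k$ is exactly what makes nonvanishing descend), and a compatible family of defects assembles into a genuine $\mathcal F$-morphism violating stability of $\tilde x$, since in $M(P)=\dirlim_k M(Q_k)$ an element whose representatives along the compatible chain are all nonzero is nonzero. Both proofs circumvent the same trap, which you correctly diagnose: the level at which a single defect dies depends on the pair, because inflation into the preimage need not be injective; the paper gets a uniform level from finiteness at level $j$ plus lifting, you get it from compactness. What each buys: the paper's argument is explicit about the working level $k$, while yours is slicker and --- contrary to your closing remark --- never actually uses saturation. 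Your argument needs only the downward functors $\mathcal F_{k'}\rightarrow\mathcal F_k$ and the description $\Hom_{\mathcal F}(P,S)=\invlim_k \Hom_{\mathcal F_k}(Q_k,S_k)$ for the surjective system $Q_k=PU_k/U_k$, both of which belong to the inverse-limit structure of an arbitrary pro-fusion system in the sense of \cite{SS}; so your proof in fact yields the statement without the pro-saturation hypothesis, which the paper uses only to secure the lifting property. Two small points to make explicit when writing this up: that $P=\invlim Q_k$ satisfies $PU_k/U_k=Q_k$ requires the surjectivity of the system $\{Q_k\}$, which the compatibility of your family supplies (an inverse limit of a surjective system of finite groups surjects onto each term); and that the assembled $\varphi=\invlim\psi_k$ is injective and continuous, hence a legitimate morphism of $\mathcal F$, follows from each $\psi_k$ being injective.
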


\begin{proof}
	It is straightforward to show that the inflation maps induce an embedding of $\dirlim M(S_i)^{\mathcal F_i}$ into $M(S)^{\mathcal F}$, so we concentrate on proving that this map is surjective. We use notation from \cite{SS}. 
	
	Because $\mathcal F$ is pro-saturated, we know from \cite[4.5]{SS} that we can take the $\mathcal F_i$ to be the saturated fusion systems $\mathcal F/N$ as $N$ runs through the open strongly $\mathcal F$-closed subgroups of $S$. It follows that for  any $P \leq S_i$, $\varphi \in \Hom_{\mathcal F_i}(P,S_i)$ and $j \geq i$ there is a $\tilde{\varphi}_j \in \Hom_{\mathcal F_j}(\tilde{P}_j, S_j)$ such that $f_{i,j} \tilde{\varphi}_j=\varphi f_{i,j} {\big | _{\tilde{P}_j}}$, where $f_{j,k} \! : S_k \rightarrow S_j$ is the quotient map and $\tilde{P}_j=f_{i,j}^{-1}(P)$. Similarly, $\varphi$ can also be lifted to $\tilde{\varphi} \in \Hom_{\mathcal F}(\tilde{P}, S)$.
	
	Let $x \in M(S)^{\mathcal F}$; then $x=f_j^*(x_j)$ for some $j$ and some $x_j \in M(S_j)$, where $f_j \! : S \rightarrow S_j$ is the quotient map. The element $x$ satisfies the conditions $(i_P-\varphi)^*(x)=0$ for all $P \leq S$ and $\varphi \in \Hom_{\mathcal F}(P,S)$. 
	
	Let $Q \leq S_j$ and $\theta \in \Hom_{\mathcal F_j}(Q,S_j)$. Then 
	\begin{multline*} {f_j \big | _{\tilde{Q}}}^*(i_Q^*-\theta^*)(x_j) =(i_{Q}f_j \big | _{\tilde{Q}}-\theta f_j \big | _{\tilde{Q}})^*(x_j) \\
		=(f_j i_{\tilde{Q}}-f_j \tilde{\theta})^*(x_j)
		=(i_{\tilde{Q}}-\tilde{\theta})^*f_j^*(x_j)
		=(i_{\tilde{Q}}-\tilde{\theta})^*(x)
		=0.\end{multline*}
	 Thus there is an $\ell=\ell(Q,\theta) \geq j$ such that $ f_{j,\ell}\big | _{\tilde{Q}_\ell}^*(i_Q^*-\theta^*)(x_j) =0$. There are only finitely many possible different $Q$ and $\theta$ so there is a $k$ such that $k \geq \ell(Q,\theta)$ for all of them and hence   $(i_{\tilde{Q}_k}-\tilde{\theta}_k^*)f_{j,k}^*(x_j)=f_{j,k}\big | _{\tilde{Q}_k}^*(i_Q^*-\theta^*)(x_j)=0$ for all $Q$ and $\theta$. 

Set $x_k=f_{j,k}^*(x_j)\in M(S_k)$; we need to show that $x_k\in M(S_k)^{\mathcal F _k}$. Suppose that $P \leq S_k$ and $\varphi \in \Hom_{\mathcal F_k}(P,S_k)$. Let $Q=f_{j,k}(P)$; then $\varphi$ induces a $\theta \in \Hom_{\mathcal F_j}(Q,S_j)$ such that $f_{j,k} \varphi = \theta f_{j,k} \big |_P$. As mentioned at the beginning of the proof, there is a $\hat{\theta} \in \Hom _{\mathcal F_k} (\hat{Q}_k,S_k)$, where $\hat{Q}_k= f_{j,k}^{-1}(Q)$, such that $f_{j,k} \hat{\theta}_k = \theta f_{j,k} \big |_{\hat{Q}_k}$. Thus $f_{j,k}\hat{\theta}_k i_P^{\hat{Q}_k} = f_{j,k} \varphi$, where $i_P^{\hat{Q}_k}$ is the inclusion of $P$ in $\hat{Q}_k$. Now
	\begin{multline*} (i_P-\varphi)^*(x_k)
	=(i_P-\varphi)^*f_{j,k}^*(x_j)
	=(f_{j,k}i_P-f_{j,k} \varphi)^*(x_j) \\
	=(f_{j,k}i_{\hat{Q}_k} i^{\hat{Q}_k}_P- f_{j,k} \hat{\theta}_k i^{\hat{Q}_k}_P)^*(x_j)
	=i^{\hat{Q}_k*}_P(i_{\hat{Q}_k}-\hat{\theta}_k)^*f_{j,k}^*(x_j)
	=0, \end{multline*} 
as required.

%so $(i_{\tilde{Q}_k}-\tilde{\theta}_k^*)(x_k)=0$ for all $Q$ and $\theta$.

\begin{comment}But $\mathcal F_j$ is finite, so there is a $k \geq j$ such that $\mathcal F$ and $\mathcal F_k$ have the same set-theoretic image in $\mathcal F_j$. Set $x_k=f_{j,k}^*(x_j)$.

For any $Q \leq S_k$ and $\theta \in \Hom_{\mathcal F_k}(Q,S_k)$ there is a $\tilde{Q} \leq S$ such that $f_k(\tilde{Q})=Q$ and a $\varphi \in \Hom _{\mathcal F}(\tilde{Q},S)$ such that $\theta_j=\varphi_j\! : \tilde{Q}_j \rightarrow S_j$. Thus
\[(i^*_Q-\theta^*)(x_k) = (i^*_Q-\theta^*)f_{j,k}^*(x_j) = f_{j,k}|_{\tilde{Q}_j}^*(i_{\tilde{Q}}^*-\theta_j)(x_j)=0.\] \end{comment}
 
\end{proof}

As was pointed out above, we can now deduce the next result.

\begin{corollary}
	\label{co:stab}
	Let $M$ be a cohomological inflation functor over a ring $R$ in which every rational prime except $p$ is invertible, extended to profinite groups as above. Let $G$ be a profinite group; then $M(G) \cong M(S)^{\mathcal F_S(G)}$.
	\end{corollary}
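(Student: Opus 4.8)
The plan is to combine the theorem we just proved with the finite-level stable-element description, and then pass to the colimit. The corollary is essentially a matter of assembling pieces that are already available in the excerpt, so I would keep the proof short.

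First I would recall the finite case. For $G$ a finite group with Sylow $p$-subgroup $S$, the hypotheses on $M$ (cohomological inflation functor, every rational prime except $p$ invertible in $R$) are exactly those under which the method of stable elements applies, as stated in the discussion preceding Theorem~\ref{th:stable}. This gives that restriction $M^*(i_S)\colon M(G)\to M(S)$ is injective with image the stable submodule $M(S)^{\mathcal F_S(G)}$. So $M(G)\cong M(S)^{\mathcal F_S(G)}$ in the finite case.

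Next I would pass to profinite $G$ by writing $G=\invlim G/N_i$ and applying the colimit definitions. By construction $M(G)=\dirlim M^*(G/N_i)$, and by the finite case each $M^*(G/N_i)\cong M(S_i)^{\mathcal F_{S_i}(G/N_i)}$, where $S_i=SN_i/N_i$ is the Sylow $p$-subgroup of $G/N_i$ and the $\mathcal F_{S_i}(G/N_i)$ are saturated fusion systems. These isomorphisms are natural with respect to the inflation (quotient) maps, so they are compatible in the direct system. Taking the colimit gives
\[ M(G)\cong\dirlim M(S_i)^{\mathcal F_{S_i}(G/N_i)}. \]
The final step is to identify this colimit with $M(S)^{\mathcal F_S(G)}$. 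The pro-fusion system $\mathcal F_S(G)=\invlim \mathcal F_{S_i}(G/N_i)$ is pro-saturated, as noted in the text, so Theorem~\ref{th:stable} applies directly (with $\mathcal F_i=\mathcal F_{S_i}(G/N_i)$) and yields $\dirlim M(S_i)^{\mathcal F_{S_i}(G/N_i)}\cong M(S)^{\mathcal F_S(G)}$. Chaining the isomorphisms gives the result.

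I do not expect a serious obstacle, since the real content has been isolated into Theorem~\ref{th:stable}; the one point requiring a little care is checking that the finite-level isomorphisms $M^*(G/N_i)\cong M(S_i)^{\mathcal F_{S_i}(G/N_i)}$ are genuinely compatible with the transition maps of the two inverse/direct systems, so that they assemble into a colimit isomorphism rather than merely a bijection at each level. This compatibility follows from the naturality of restriction and inflation and from the fact, recorded before Theorem~\ref{th:stable}, that $\mathcal F_S(G)$ is precisely the inverse limit of the $\mathcal F_{S_i}(G/N_i)$; but it is the step I would write out most carefully.
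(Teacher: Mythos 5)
Your proof is correct and follows exactly the paper's intended deduction: the paper proves this corollary by combining the finite-group stable-elements isomorphism $M(G/N_i)\cong M(S_i)^{\mathcal F_{S_i}(G/N_i)}$, the colimit definition $M(G)=\dirlim M^*(G/N_i)$, and Theorem~\ref{th:stable} applied to the pro-saturated system $\mathcal F_S(G)=\invlim \mathcal F_{S_i}(G/N_i)$, just as you do. Your extra care about compatibility of the finite-level isomorphisms with the transition maps is a reasonable point to spell out, but it is the same argument the paper sketches in the paragraph preceding Theorem~\ref{th:stable}.
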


This motivates the study of $M(S)^{\mathcal F}$ for an arbitrary pro-fusion system. There is also a dual version of this theory that applies to homology.
	
\begin{theorem}
	\label{th:fus}
	For given $p$ and $r$, consider the rings $H^*(S)^{\mathcal F}$, where $S$ is a pro-$p$ group with $p$-sectional rank at most $r$ and $\mathcal F$ is a pro-saturated pro-fusion system on $S$.
		Then there are only finitely many such rings up to isomorphism
		and they are all noetherian.
	\end{theorem}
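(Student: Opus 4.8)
The plan is to imitate the two-step structure already used for Theorems~\ref{th:main} and~\ref{th:bounds}: first dispose of the case of a finite $p$-group, then pass to the pro-$p$ case via the colimit description of Theorem~\ref{th:stable}.

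First I would treat a finite $p$-group $S$ of rank at most $r$ together with a saturated fusion system $\mathcal F$ on it. Here $H^*(S)^{\mathcal F}$ is the subring of stable elements, cut out of $H^*(S)$ by the conditions $(i_P^*-\varphi^*)(x)=0$ for $P \leq S$ and $\varphi \in \Hom_{\mathcal F}(P,S)$, and I would count possibilities exactly as in the finite-group case of Theorem~\ref{th:main}. By that theorem only finitely many isomorphism types occur among the rings $H^*(S)$ and $H^*(P)$ for $S,P$ of rank at most $r$, each is finitely generated, so there are only finitely many graded ring homomorphisms $H^*(S) \to H^*(P)$; hence each condition $i_P^*-\varphi^*$ lies in a fixed finite set of differences of such homomorphisms. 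The stable subring is therefore determined by a subset of this finite set of conditions, so up to isomorphism there are only finitely many rings $H^*(S)^{\mathcal F}$ in the finite case.

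The one point that does not follow from this count is noetherianity: a subring of a noetherian ring need not be finitely generated, and an exotic $\mathcal F$ need not be realised by any finite group, so I cannot simply quote that group cohomology is noetherian. Instead I would invoke the fusion-system form of the Evens--Venkov theorem, namely that for a saturated $\mathcal F$ the module $H^*(S)$ is finitely generated over the stable subring $H^*(S)^{\mathcal F}$; the Artin--Tate lemma then makes $H^*(S)^{\mathcal F}$ a finitely generated $\mathbb F_p$-algebra, hence noetherian. Since there are finitely many of these rings and all are finitely generated, they are all generated in degrees at most some $N=N(p,r)$.

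Finally I would pass to a pro-$p$ group $S$ of rank at most $r$ with a pro-saturated $\mathcal F=\invlim \mathcal F_i$. By \cite[4.5]{SS} the $\mathcal F_i$ may be taken saturated on the finite quotients $S_i$, which again have rank at most $r$ since sectional rank passes to quotients, and Theorem~\ref{th:stable} applied to $M=H^*(-;\mathbb F_p)$ gives a graded ring isomorphism $H^*(S)^{\mathcal F} \cong \dirlim H^*(S_i)^{\mathcal F_i}$ (inflation being multiplicative). Each term is generated in degrees at most $N$ by the finite case, so the colimit is too; its part in degrees at most $N$ is a subspace of $\bigoplus_{i \leq N} H^i(S)$, whose dimension is bounded in terms of $p$ and $r$ by Theorem~\ref{th:bounds}(1), so it takes only finitely many values. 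As $H^*(S)$ itself ranges over only finitely many isomorphism types by the pro-$p$ case of Theorem~\ref{th:main}, there are only finitely many subrings of $H^*(S)$ generated in degrees at most $N$, and these are all finitely generated and hence noetherian. The main obstacle is thus the finite-case noetherianity for possibly exotic saturated fusion systems; once the fusion-system Evens--Venkov theorem is in hand, everything else is a transcription of the arguments already given for Theorems~\ref{th:main}, \ref{th:bounds} and~\ref{th:stable}.
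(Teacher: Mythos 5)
Your proposal is correct and takes essentially the same route as the paper: the finite case is handled by the stable-element counting argument from the finite-group case of Theorem~\ref{th:main}, with noetherianity supplied by the fusion-system Evens--Venkov fact you invoke, which is precisely \cite[5.2]{BLO} as cited in the paper, and the pro-$p$ case then follows via Theorem~\ref{th:stable}, uniform generation in degrees at most $N$, and finiteness of $H^*(S)$ in that range. Your explicit flagging of the exotic-fusion-system subtlety (that one cannot simply quote noetherianity of group cohomology) is a point the paper handles silently by the same citation, so there is no substantive difference.
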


\begin{proof} By hypothesis, $\mathcal F$ can be expressed as an inverse limit of fusion systems $\mathcal F_i$ on finite $p$-groups $S_i$. By Theorem~\ref{th:stable}, $H^*(S)^{\mathcal F}$ is the direct limit of the $H^*(S_i)^{\mathcal F_i}$. The finiteness of the number of rings is now proved in the same way as in the case of finite groups in Theorem~\ref{th:main}.
	
	 For the noetherian property we use the fact from \cite[5.2]{BLO} that each $H^*(S_i)^{\mathcal F_i}$ is noetherian. We have just seen that there are only finitely many isomorphism classes of such rings, thus there is a number $N$ such that they are all generated in degrees at most $N$. This property passes to the direct limit, which is $H^*(S)^{\mathcal F}$.
	
	We are therefore considering subrings of $H^*(S)$ that are generated in degrees at most $N$. But $H^*(S)$ is finite in this range of degrees, so the subrings must be noetherian.
		\end{proof}

Of course, Theorem~\ref{th:main} can be seen as a corollary of these last two results.

\section{The Steenrod Algebra}	

In any situation where we have only finitely many non-isomorphic cohomology rings and they are all noetherian, we also have only finitely many non-isomorphic algebras over the Steenrod algebra. This is because, on any particular ring, the action of a Steenrod power is determined by the images of the generators of the ring, by the Cartan formula, and there are only finitely many possibilities. Also, sufficiently high powers act trivially, because the noetherian assumption implies that there is a bound the degrees of the generators, so these are sent to 0, by the condition for being unstable.

If we take cohomology with coefficients in the $p$-adic integers instead of $\mathbb F_p$ we get  infinitely many different cohomology rings.
This happens even in the case of sectional $p$-rank 1, which includes the cyclic groups of $p$-power order.

%%%%%%%%%%%%%%%%%%%%%%%%%%%%%%%%%%%%%%%%%%%%%%%%%%%%%%%%%%%%%%%%%%%%%%%%%%%%%%%%

\end{document}